\numberwithin{equation}{section}
\newtheorem{theorem}{Theorem}[section]
\newtheorem{lemma}[theorem]{Lemma}
\newtheorem{conjecture}[theorem]{Conjecture}
\theoremstyle{definition}
\def\cqedsymbol{\ifmmode$\lrcorner$\else{\unskip\nobreak\hfil
\penalty50\hskip1em\null\nobreak\hfil$\lrcorner$
\parfillskip=0pt\finalhyphendemerits=0\endgraf}\fi} 
    \newcommand{\lab}[1]{\label{#1}}                
  \newcommand{\thlab}[1]{\thlabel{#1}} 
\renewcommand{\ge}{\geqslant} 
\renewcommand{\le}{\leqslant}
\newcommand{\se}{\subseteq}
\newcommand{\sm}{\setminus}
\newcommand{\size}[1]{\ensuremath{\left| #1 \right|}}
\newcommand{\eps}{\varepsilon}
\newcommand{\cA}{\ensuremath{\mathcal{A}}}
\newcommand{\cB}{\ensuremath{\mathcal{B}}}
\newcommand{\cF}{\ensuremath{\mathcal{F}}}
\newcommand{\cG}{\ensuremath{\mathcal{G}}}
\definecolor{myred}{RGB}{220,24,10}
\title{The Erd\H{o}s-Hajnal conjecture for caterpillars and their complements}
\author{%
Anita Liebenau\thanks{School of Mathematical Sciences, Monash University VIC 3800, Australia. 
Email: anita.liebenau@monash.edu. Research supported by a DECRA Fellowship from the 
Australian Research Council. The author would like to thank the Institute of Informatics, University of Warsaw, for its hospitality where this work was carried out.}
\and
Marcin Pilipczuk\thanks{%
Institute of Informatics, University of Warsaw, Poland. 
Email: marcin.pilipczuk@mimuw.edu.pl. 
The research of Marcin Pilipczuk is a part of projects that have received funding from the European Research Council (ERC) under the European Union's Horizon 2020 research and innovation programme
under grant agreement No 714704.}}
\date{}
\begin{document}

\maketitle

\begin{textblock}{20}(0, 12.5)
\includegraphics[width=40px]{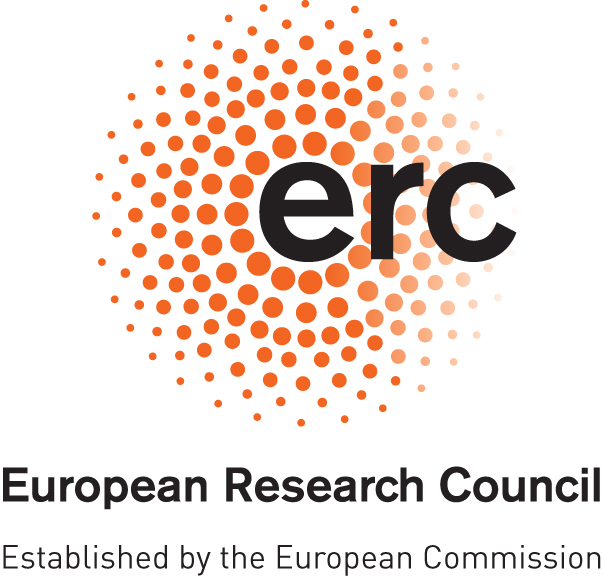}%
\end{textblock}
\begin{textblock}{20}(-0.25, 12.9)
\includegraphics[width=60px]{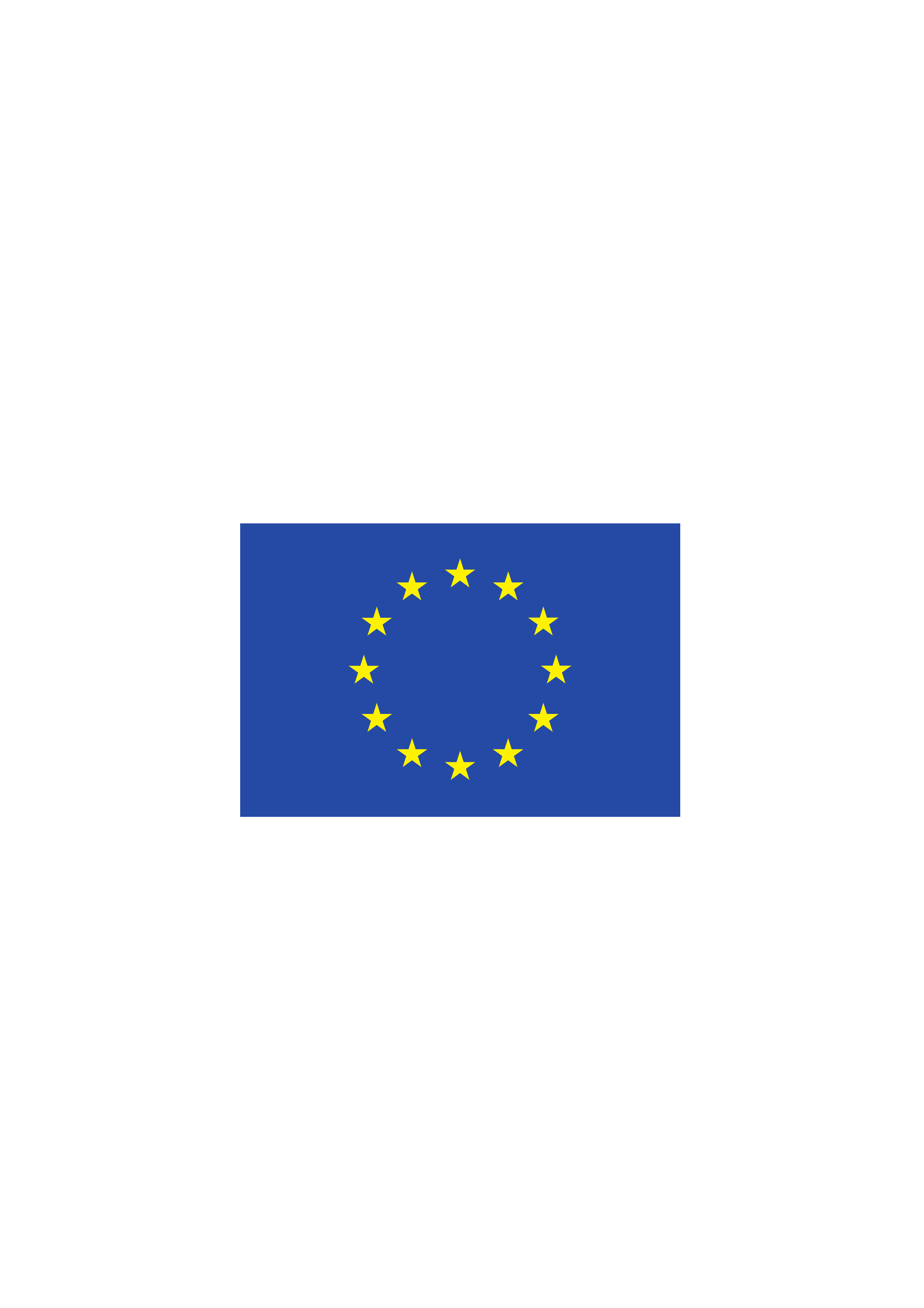}%
\end{textblock}

\begin{abstract}
The celebrated Erd\H{o}s-Hajnal conjecture states that for every proper hereditary graph class $\cG$
there exists a constant $\eps = \eps(\cG) > 0$ such that every graph $G \in \cG$ contains a clique or an independent set
of size $|V(G)|^\eps$. 
Recently, there has been a growing interest in the symmetrized variant of this conjecture, where one additionally 
requires $\cG$ to be closed under complementation.

We show that any hereditary graph class that is closed under complementation and excludes a fixed caterpillar as an induced subgraph
satisfies the Erd\H{o}s-Hajnal conjecture.
Here, a caterpillar is a tree whose vertices of degree at least three lie on a single path (i.e., our caterpillars may have arbitrarily long legs).
In fact, we prove a stronger property of such graph classes, called in the literature the \emph{strong} Erd\H{o}s-Hajnal property:
for every such graph class $\cG$, there exists a constant $\delta = \delta(\cG) > 0$ such that every graph $G \in \cG$ contains two disjoint sets $A,B \subseteq V(G)$
of size at least $\delta|V(G)|$ each so that either all edges between $A$ and $B$ are present in $G$, or none of them. 
This result significantly extends the family of graph classes for which we know that the strong Erd\H{o}s-Hajnal property holds; for graph classes excluding a graph $H$ and its complement
it was previously known only for paths~\cite{blt2015} and hooks (i.e., paths with an additional pendant vertex at third vertex of the path)~\cite{hooks}.
\end{abstract}

\section{Introduction}
Classical bounds on the Ramsey numbers due to Erd\H{o}s and Szekeres~\cite{es1935} and Erd\H{o}s~\cite{e1947} imply that every
graph on $n$ vertices contains a clique or an independent set of size $\Omega(\log n)$, 
and this bound is tight up to the constant factor. 
A long-standing conjecture by Erd\H{o}s and Hajnal \cite{eh1977} states that this relation drastically changes
if attention is restricted to a fixed hereditary graph class $\cG$.

A graph class $\cG$ is called \emph{hereditary} if it is closed under taking induced subgraphs.
We say that a graph class $\cG$ has the \emph{Erd\H{o}s-Hajnal} property if there exists
a constant $\eps = \eps(\cG) > 0$ such that every $n$-vertex graph $G \in \cG$ contains a clique
or an independent set of size at least $n^\eps$. 
In 1989, Erd\H{o}s and Hajnal ~\cite{eh1989} proved a weaker bound of $2^{\Omega(\sqrt{\log n})}$ and conjectured
that every  proper hereditary graph class has the Erd\H{o}s-Hajnal property. 

Up to today, this conjecture remains widely open, and the Erd\H{o}s-Hajnal property remains
proved only for a limited number of graph classes $\cG$. We say a graph $G$ is {\em $H$-free} if $G$ does not 
contain the graph $H$ as an induced subgraph. It is known, for example, that 
the class of bull-free graphs has the Erd\H{o}s-Hajnal property~\cite{cs2008}. 
Yet, it is unknown whether the class of $P_5$-free graphs or the class of $C_5$-free graphs has the Erd\H{o}s-Hajnal property. 
The substitution procedure by Alon, Pach, and Solymosi~\cite{aps2001} provides more examples of graph classes with the 
Erd\H{o}s-Hajnal property. For a full overview, we refer to 
a survey of Chudnovsky~\cite{c2014}.

Substantial progress has been obtained by considering a weaker variant of the conjecture that asserts the Erd\H{o}s-Hajnal property for every proper hereditary graph class that is additionally closed under complementation. In this line of direction, a breakthrough paper of Bousquet, Lagoutte, and Thomass\'{e}~\cite{blt2015} proved the Erd\H{o}s-Hajnal property for graph classes excluding a fixed path and its complement. 
Bonamy, Bousquet, and Thomass\'{e} extended this result to graph classes closed under complementation that exclude long holes~\cite{bbt2016}, while Choromanski, Falik, Patel and the two authors extended the above result to {\em hooks}: paths with an additional pendant edge attached to the third vertex of the path~\cite{hooks}. 
Note that both the class of bull-free graphs and the class of $C_5$-free graphs are closed under complementation.

In fact, all the aforementioned results, 
except for the result for bull-free graphs,
prove a much stronger property of a graph class in question,
called the \emph{strong Erd\H{o}s-Hajnal property}. A graph class $\cG$ has the strong
Erd\H{o}s-Hajnal property if there exists a constant $\delta = \delta(\cG) > 0$ such that every
$n$-vertex graph $G \in \cG$ contains two disjoint sets $A,B \subseteq V(G)$ of size at least
$\delta n$ each, such that $A$ and $B$ are fully adjacent (there are all possible edges between
them) or anti-adjacent (there are no edges between them).
A simple inductive argument shows that a hereditary graph class with the strong Erd\H{o}s-Hajnal
property has also the (standard) Erd\H{o}s-Hajnal property (cf.~\cite{fp2008}).
Further graph classes known to obey the strong Erd\H{o}s-Hajnal property include
line graphs~\cite{hooks} and certain subclasses of perfect graphs~\cite{hooks,lt2016}.

Not all proper hereditary graph classes have the strong Erd\H{o}s-Hajnal property: 
a certain poset construction shows that this property is false for comparability graphs (and
    hence also for perfect graphs)~\cite{f2006}, and a simple randomized construction 
of~\cite{hooks} shows that if one considers the class of $\cF$-free graphs for a finite set
$\cF$ (i.e., graphs that do not contain any graph in $\cF$ as an induced subgraph),
then the strong Erd\H{o}s-Hajnal property is false unless $\cF$ contains a forest and a complement
of a forest.
This, together with the positive results of~\cite{blt2015,hooks}, motivates the following conjecture:
\begin{conjecture}\label{conj1}
For every tree $T$, the class of graphs excluding $T$ and its complement as induced subgraphs 
has the strong Erd\H{o}s-Hajnal property.
\end{conjecture}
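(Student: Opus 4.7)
The plan is to attempt an inductive proof on a natural complexity parameter of the tree $T$, building on the caterpillar case that is the main result of this paper. One candidate parameter is the \emph{branching depth} of $T$, defined recursively: a path has depth $0$; a tree with all branching vertices on a single path (i.e., a caterpillar in the paper's sense) has depth $1$; and in general, $T$ has branching depth $d$ if, upon contracting each maximal ``leg subtree'' (a subtree rooted at a branching vertex whose spine contains no further branching) to a single vertex, one obtains a tree of branching depth $d-1$. The base case $d \le 1$ is exactly the caterpillar case proved in the present paper, and the inductive hypothesis would assume the strong \EH property for graphs excluding any tree of branching depth $< d$ and its complement.

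The inductive step is the heart of the argument. Given $T$ of branching depth $d$ and a graph $G$ that is both $T$-free and $T^\compl$-free, I would try to show one of two things: either $G$ already contains a pair $A, B$ of large disjoint sets that are fully adjacent or anti-adjacent, or $G$ contains a large induced subgraph that excludes some simpler tree $T'$ (and its complement) obtained from $T$ by pruning a single leg-subtree. In the latter case, induction closes the argument. The reduction would plausibly go through a Ramsey-style partitioning of $G$: start from a large balanced pair produced by a weak regularity-style step, then try to embed the legs of $T$ one at a time; either one fails to embed some specific leg-subtree uniformly across the partition (yielding a simpler forbidden structure and letting induction fire), or one embeds $T$ entirely, contradicting $T$-freeness.

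The main obstacle will be the inductive step itself. The caterpillar-based arguments, following the template of Bousquet, Lagoutte, and Thomass\'{e}~\cite{blt2015}, crucially exploit the linear order on the spine of the caterpillar to ``sweep'' through candidate embeddings via a chain of Ramsey-type refinements, with each leg attached to a distinct spine position handled in turn. Once the spine branches---even as mildly as allowing a single leg that itself contains a branching vertex---this linear sweep breaks down, because embeddings of sibling leg-subtrees rooted at the same branching vertex must be controlled simultaneously rather than in sequence, and their pairwise interactions must be checked. A second obstacle is the symmetry enforced by complement-closure: the pigeonholing on one side must mirror exactly on the complement side, and when branching is not confined to a path, aligning the two arguments appears to require a new structural tool rather than a direct lift of~\cite{blt2015,hooks}. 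Resolving both likely demands either a genuinely new invariant for non-caterpillar trees or an intermediate graph construction that reduces, black-box, to the caterpillar case established here.
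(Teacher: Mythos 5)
The statement you were given is labeled as a \emph{Conjecture} in the paper, and the paper does not prove it. Theorem~\ref{thm:main} establishes only the special case where $T$ is a caterpillar (all vertices of degree at least three on a single path, arbitrary leg lengths), and the general tree case is explicitly left open --- the authors remark that the natural next step would be to handle the ten-vertex tree in Figure~\ref{fig:unknown}, the smallest non-caterpillar. So there is no proof in the paper for your attempt to be compared against. Your proposal correctly recognizes this: it is a research plan rather than a proof, and it is candid about where the gaps lie.

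That said, two concrete problems with the plan as written. First, the dichotomy you propose for the inductive step --- either find a large fully adjacent/anti-adjacent pair, or find a large induced subgraph excluding a simpler tree $T'$ obtained by pruning a leg-subtree --- runs in the wrong logical direction. If $T'$ is an induced subtree of $T$, then $T'$-freeness is a \emph{strictly stronger} hypothesis than $T$-freeness, so a $T$-free graph has no reason to contain a large $T'$-free induced piece; the reduction would have to produce a piece that excludes $T'$ \emph{together with enough recorded adjacency structure} to re-attach the pruned legs, which is exactly the kind of bookkeeping the paper's ferns and $(\alpha,h,d)$-junior caterpillars (Lemma~\ref{lem:main}) are designed to carry, and nothing about that machinery reduces black-box to the caterpillar theorem. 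Second, the obstacle you identify --- controlling several sibling subtrees hanging off the same branching vertex simultaneously, rather than sweeping along a spine --- is real and is precisely the point where the paper's induction is tuned to caterpillars: Lemma~\ref{lem:main} inducts on the spine degree $d$, and the iterative procedure accumulates $d+1$ pairwise anti-adjacent buds at a single spine position before pushing the spine forward, a step that exploits the fact that each bud needs no further internal branching. Once the buds themselves must host branching subtrees, the colour-compatibility and anti-adjacency invariants maintained in the procedure no longer suffice, and a genuinely new invariant (or a different structural decomposition) is needed --- which is the same conclusion you reach. In short, your diagnosis of the difficulty is accurate, but the proposed induction on branching depth does not, as formulated, give a valid inductive step, and the conjecture remains open beyond caterpillars.
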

In this work we prove this conjecture for $T$ being a {\em caterpillar} which is a tree in which all 
vertices of degree at least three lie on a single path (i.e., our caterpillars
    may have long legs).

\begin{theorem}\thlab{thm:main}
For every caterpillar $T$, the class of graphs excluding $T$ and the complement of $T$
as induced subgraphs has the strong Erd\H{o}s-Hajnal property.
\end{theorem}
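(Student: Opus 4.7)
The plan is to extend the iterative-embedding technique used by Bousquet, Lagoutte, and Thomass\'e~\cite{blt2015} for paths, and refined in~\cite{hooks} for hooks, to the caterpillar $T$. Write $T$ as a spine path $s_1 s_2 \cdots s_p$ together with pendant paths (the \emph{legs}) attached at various spine vertices. Fix a constant $\delta = \delta(T) > 0$ and suppose for contradiction that $G \in \cG$ is a large graph that is $T$-free and $\overline{T}$-free, yet contains no pair of disjoint $\delta |V(G)|$-sized subsets that are either fully adjacent or fully anti-adjacent. Since the class is closed under complementation, I shall freely swap $G$ and $\overline{G}$ whenever that is convenient.

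The core of the argument is an iterative procedure that attempts to embed $T$ into $G$. Throughout the procedure I maintain, for every vertex $x$ of $T$ not yet embedded, a large \emph{arena} $A_x \subseteq V(G)$ of candidate images of $x$, each of which has the correct adjacency pattern with respect to all already-embedded vertices. The vertices of $T$ are processed in a structural order: walk along the spine from one end to the other, and upon arriving at a spine vertex $s_i$ first embed every leg of $s_i$, vertex by vertex, before advancing to $s_{i+1}$. The local step invokes a BLT-style pruning lemma: under the assumed failure of the strong Erd\H{o}s-Hajnal property, whenever an arena has to be split according to the neighbourhood of a newly embedded vertex, at least one side of the split remains of linear size, for otherwise an averaging argument on the minority side immediately produces the forbidden homogeneous pair. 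Since $|V(T)|$ is a constant depending only on $T$, the arenas shrink only by a bounded factor overall, so the procedure either completes an induced $T$ (contradicting $T$-freeness) or terminates earlier by exhibiting a complete or anti-complete pair of linear size.

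The main obstacle is that, unlike a path or a hook, $T$ \emph{branches}, and so the iterative pruning carries genuinely global constraints: every leg of $s_i$ must be anti-adjacent to the entire spine outside $s_i$ and to every other leg, whether already placed or to be placed later. I expect the technical heart of the argument to be an organisation of the embedding, together with interleaved prunings, that keeps all arenas simultaneously linear in size; in particular, when processing the legs of a spine vertex one must have already prepared the arenas of the subsequent spine vertices and legs with the correct adjacency pattern, and complementation has to be invoked not only at the start but locally, to swap the roles of complete and anti-complete prunings whenever one side is too costly. A further complication is that the legs of $T$ may be arbitrarily long, so that the embedding of a single leg is itself a BLT-style induced-path construction inside a carefully pruned arena, and I would expect the path case of~\cite{blt2015} to reappear as a building block. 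Packaging all of this into one clean inductive statement that handles arbitrary leg lengths and an arbitrary number of legs uniformly is, in my view, where the caterpillar case genuinely goes beyond the previously known cases.
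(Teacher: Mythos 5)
Your high-level plan identifies the right ingredients (Fox--Sudakov reduction via closure under complementation, BLT path-growing to realise long legs, and the branching as the key new obstacle), but the per-vertex ``arena'' pruning framework that you propose as the engine of the argument does not work, and you explicitly leave the branching step as a ``technical heart'' you hope can be organised without saying how.

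The pruning lemma you invoke is not a lemma. Splitting an arena $A_x$ by $N(v)$ for an already-embedded vertex $v$ into $N(v)\cap A_x$ and $A_x\setminus N[v]$, \emph{some} side is trivially of size at least $|A_x|/2$, but you do not get to keep the large side: the caterpillar $T$ dictates which side $x$ must lie on. The failure of the strong Erd\H{o}s--Hajnal property does not force the side you need to be large (a single vertex $v$ having few neighbours in a linear set $A_x$ is not a homogeneous pair). Worse, once you apply Fox--Sudakov---which you must, and which your complementation remark implicitly does---you are in a regime where every vertex has degree at most $\eps|V(G)|$; then whenever $T$ requires $x$ adjacent to $v$, the arena $A_x$ would have to live inside $N(v)$, which is sublinear for the relevant $\eps$. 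So the invariant ``every arena stays of linear size while respecting the prescribed adjacencies'' is unmaintainable the moment a second neighbour of an embedded vertex is needed, i.e.\ already at the first spine vertex with two attachments (the next spine vertex plus one leg). This is precisely where your plan stops and where the real work begins, and you offer no mechanism to get around it.

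The paper's actual route sidesteps per-vertex arenas entirely. After Fox--Sudakov it fixes a partition into $\ell$ colour classes and defines an $\eps$-clean graph (bounded degree into every class, no large anti-adjacent pair within a class). It then builds, by induction on the number $d$ of legs per spine vertex, a \emph{junior caterpillar}: an induced spine path together with pairwise anti-adjacent ``buds'' (connected sets whose neighbourhood covers an $\alpha$-fraction of some colour class, placed in distinct colour classes). The inductive step constructs this via an iterative ``fern'' procedure: at each step one picks a connected set $A$ inside a current working set, chosen minimal so that $N(A)$ covers roughly a third of some other class $W_b^k$ but not much more of any class, discards the colour $a$ of $A$, shrinks the remaining $W_i^k$ by a constant factor, and records $A$ as a new bud in a tree-structured family. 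Crucially, adjacency between consecutive spine vertices is \emph{automatic} from the fern structure (a vertex of a child bud has a neighbour in the parent bud), rather than enforced by restricting an arena to a small neighbourhood; and the legs are grown at the very end, inside the bud-plus-private-neighbourhood pieces, via the BLT path-growing lemma. It is this shift from ``one arena per vertex'' to ``one bud per spine vertex, whose connectivity and large neighbourhood carry all the slack you need'' that makes the branching tractable, and it is exactly the piece your proposal is missing.
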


In particular, Theorem~\ref{thm:main} resolves Conjecture~\ref{conj1} for every tree on at most $9$ vertices, 
a tree not covered by Theorem~\ref{thm:main} is depicted 
on Figure~\ref{fig:unknown}.
\begin{figure}[tb]
\begin{center}
\includegraphics{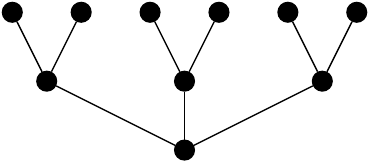}
\end{center}
\caption{The smallest, and the only 10-vertex tree,
  that is not a caterpillar, and thus is not covered by Theorem~\ref{thm:main}.}\label{fig:unknown}
\end{figure}

\medskip

The paper is organized as follows. After some preliminaries in Section~\ref{sec:prelims},
    we provide the main proof in Section~\ref{sec:proof}.
    In Section~\ref{sec:conc} we outline some directions for future research.

\section{Preliminaries}\label{sec:prelims}
We use standard graph theoretic notation. 
All graphs in this paper are finite and simple. 
Let $G$ be a graph, let $A,B\se V(G)$ such that $A\cap B = \emptyset$.  
By $G[A]$ we denote the subgraph in $G$ induced by the subset $A$. 
The set of vertices that have a neighbour in $A$ is denoted by 
$N(A)$, and $N[A] = N(A) \cup A$ denotes the closed neighbourhood of $A$. 
If $A = \{v\}$ we abbreviate and write $N(v)$ and $N[v]$, respectively. 
We also write $\deg(v,B)$ for $\size{N(v)\cap B}$. 
The set of edges with one endpoint in $A$ and one endpoint in $B$ is denoted by $E(A,B)$. 
If $E(A,B) = \emptyset$, we say that $A$ and $B$ are \emph{anti-adjacent}, and if $E(A,B) = A \times B$, then $A$ and $B$ are \emph{fully adjacent}.

For a constant $\eps > 0$, an \emph{$\eps$-pair} in a graph $G$ is a pair of two disjoint anti-adjacent sets $A,B \subseteq V(G)$ with $|A|,|B| \geq \eps |V(G)|$. 
Recall that a caterpillar is a tree in which all vertices of degree 
at least three lie on a single path. 
For integers $h,d,t \geq 1$ or $(h,d,t) = (1,0,0)$ we define a caterpillar $T_{h,d,t}$ 
as follows. 
Let $P^*$ be a path of length $h$, say on vertex set $\{v_1,\ldots,v_h\}$, 
and let $\{P_{i,j} : i\in [h], j\in [d]\}$ be a collection of pairwise vertex-disjoint paths 
and such that $V(P^*)\cap V(P_{i,j}) = \emptyset$ for all $i,j$. 
The graph $T_{h,d,t}$ is formed by taking the disjoint union of $P^*$ and the $hd$ 
paths $P_{i,j}$, and by adding the edges $v_iw_{ij}$, for all $i\in [h]$, $j\in[d]$, 
where $w_{ij}$ is one of the two endpoints of $P_{i,j}$. 
The graph $T_{1,0,0}$, for example, is the graph consisting of a single vertex, 
while $T_{1,1,t}$ is a path on $t+1$ vertices. 
Note that every caterpillar is an induced subgraph of $T_{h,d,t}$ for sufficiently large
parameters $h,d,t$.



We need the following two auxiliary results. The first one by Fox and Sudakov~\cite{fs2008} states that if a graph $G$ 
does not contain a graph $H$ as an induced subgraph then it contains either a very dense or a very sparse induced 
subgraph on a linear number of vertices. 
%
\begin{theorem}[\cite{fs2008}]\label{thm:fs}
For every integer $k$ and every constant $0 < \eps < 1/2$ there exists 
a constant $\delta$ such that every $n$-vertex graph $G$ satisfies the following.
\begin{itemize}
\item $G$ contains every graph on $k$ vertices as an induced subgraph;
\item there exists $A \subseteq V(G)$ of size at least $\delta |V(G)|$ such that
either $G[A]$ or the complement of $G[A]$ has maximum degree at most $\eps |A|$.
\end{itemize}
\end{theorem}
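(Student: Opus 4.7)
The plan is to derive this dichotomy from R\"odl's classical theorem combined with a Markov-style cleaning argument. Recall R\"odl's theorem: for every graph $H$ and every $\rho > 0$ there exists $\delta' = \delta'(H,\rho) > 0$ such that every $H$-free $n$-vertex graph contains a subset $S$ with $|S| \ge \delta' n$ whose induced edge density is either at most $\rho$ or at least $1-\rho$. The task is to (i) reformulate this as a dichotomy triggered by ``$G$ contains every $k$-vertex graph'', and (ii) upgrade density control to maximum-degree control.

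First, enumerate the finitely many graphs on $k$ vertices as $H_1,\ldots,H_m$, where $m \le 2^{\binom{k}{2}}$. If $G$ contains each $H_i$ as an induced subgraph, the first alternative of the theorem holds and we are done. Otherwise, fix an index $i$ with $G$ being $H_i$-free and apply R\"odl's theorem to $G$ with parameter $\rho := \eps/4$: this yields a set $S \se V(G)$ with $|S| \ge \delta'(H_i,\rho)\,n$ such that either $G[S]$ or its complement has edge density at most $\rho$. Up to replacing $G$ by its complement, assume the former.

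Second, clean $S$ to convert the density bound into a maximum-degree bound. Since
\[
\sum_{v\in S} \deg_{G[S]}(v) = 2|E(G[S])| \le \rho |S|^2,
\]
Markov's inequality implies that the set $B$ of vertices of $S$ with $G[S]$-degree strictly greater than $\eps|S|/2$ satisfies $|B| \le 2\rho|S|/\eps = |S|/2$. Setting $A := S \sm B$ gives $|A| \ge |S|/2$, and every $v\in A$ has $G[A]$-degree at most $\eps|S|/2 \le \eps |A|$. Choosing $\delta := \tfrac12 \min_i \delta'(H_i,\eps/4)$, which depends only on $k$ and $\eps$, completes the argument.

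The main obstacle is simply ensuring that $\delta$ remains a genuine positive constant after enumerating all $k$-vertex forbidden graphs; this is automatic since $m$ depends only on $k$, so the minimum over $i$ is a finite minimum of positive quantities. The more delicate, quantitative issue is that R\"odl's bound on $\delta'(H,\rho)$ is of tower type in $|H|$ and $1/\rho$, which propagates to $\delta(k,\eps)$. For the qualitative use in the present paper this is immaterial, but a sharper approach would avoid the two-step ``R\"odl then clean'' detour by running a density-increment argument directly at the level of maximum degree, which is essentially the route taken by Fox and Sudakov.
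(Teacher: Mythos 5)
Your argument is correct, but note that the paper itself gives no proof of this statement: Theorem~\ref{thm:fs} is quoted from Fox and Sudakov~\cite{fs2008} and used as a black box, so there is no internal proof to compare against. Your route is the standard folklore derivation from R\"odl's theorem: enumerate the $k$-vertex graphs, invoke R\"odl for a forbidden $H_i$ to produce a linear-size $S$ with $G[S]$ or its complement of density at most $\rho = \eps/4$, and then discard the minority of high-degree vertices of $S$ by a double-counting / Markov bound to get $A$ with $|A| \ge |S|/2$ and maximum degree at most $\eps|S|/2 \le \eps|A|$; choosing $\delta = \tfrac12\min_i \delta'(H_i,\eps/4)$ is a legitimate positive constant depending only on $k$ and $\eps$. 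Every step checks out (the Markov step actually gives the strict inequality $|B| < |S|/2$, though $|A| \ge |S|/2$ is all that is needed). The substantive difference from Fox and Sudakov's own argument is quantitative, exactly as you flag: R\"odl's theorem is typically proved via the regularity lemma, so $\delta'(H,\rho)$, and hence your $\delta$, is of tower type in $k$ and $1/\eps$, whereas Fox and Sudakov argue directly by a density-increment scheme and obtain much sharper, non-tower-type bounds. For the qualitative purposes of the present paper, which only needs $\delta > 0$, your version is a perfectly acceptable substitute for the citation; the difference would matter only if one were tracking the growth rate of the constants in Theorem~\ref{thm:main}.
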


We need the following {\em path-growing argument} of Bousquet, Lagoutte, and Thomass\'{e}~\cite{blt2015}.
Since the paper~\cite{blt2015} states weaker bounds on the constants, we provide the proof for completeness.
\begin{lemma}[\cite{blt2015}]\thlab{lem:blt}
Let $t,\Delta \geq 1$ be integers.
Then, for every connected graph $G$ of maximum degree at most $\Delta$
and more than $(t+2)\Delta$ vertices, and for every $v \in V(G)$, either $G$ contains a pair $(A,B)$ of disjoint anti-adjacent vertex sets
of size at least $\Delta$ each, or an induced path
on $t$ vertices with one endpoint in $v$.
\end{lemma}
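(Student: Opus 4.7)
My approach would be a breadth-first search from $v$, yielding layers $L_0 = \{v\}, L_1, \ldots, L_r$ that partition $V(G)$ (since $G$ is connected). Two standard observations drive everything: a shortest path from $v$ to any vertex $u$ is an induced path on $d(v,u)+1$ vertices, and $L_i$ and $L_j$ are anti-adjacent whenever $|i-j|\ge 2$ (an edge between them would witness a shorter distance to $v$). Hence in the \emph{deep} case $r \ge t-1$, I would pick any $u \in L_{t-1}$ and take the shortest $v$-to-$u$ path, an induced path on exactly $t$ vertices starting at $v$, which is one of the two desired outputs.

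In the \emph{shallow} case $r \le t-2$, the more than $(t+2)\Delta$ vertices of $G$ are packed into at most $t-1$ layers. Call a layer \emph{big} if it has at least $\Delta$ vertices. If two big layers $L_i$ and $L_j$ sit at indices with $|i-j| \ge 2$, then $(L_i, L_j)$ is the desired anti-adjacent pair. Otherwise all big layers cluster within at most two consecutive indices; since the remaining non-big layers each contribute at most $\Delta - 1$ vertices---at most $(t-2)(\Delta - 1)$ in total---the big layers must jointly carry more than $(t+2)\Delta - (t-2)(\Delta-1) = 4\Delta + t - 2$ vertices, so by averaging at least one layer $L_j$ has $|L_j| > 2\Delta$, with slack that grows in $t$.

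The crux, and the step I expect to be the main obstacle, is extracting an anti-adjacent pair from this single dense layer $L_j$. The induced subgraph $G[L_j]$ still has maximum degree at most $\Delta$. If its connected components can be partitioned into two groups of size at least $\Delta$ each, those groups form an anti-adjacent pair and we are done. Otherwise a single connected component $C$ dominates $L_j$, and the natural move is to recurse into $C$ by applying the lemma (or an internal BFS) starting from a carefully chosen $u \in L_j$. A recursive anti-adjacent pair inside $L_j$ remains anti-adjacent in $G$; a recursive induced path inside $L_j$, however, must be lifted to a $v$-rooted induced path in $G$ by prepending a shortest $v$-to-$u$ path. The hazard is that the at most $\Delta$ neighbors in $L_j$ of the BFS-parent of $u$ (which sits in $L_{j-1}$) may create chords in the concatenation. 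The constant $(t+2)\Delta$---rather than a slimmer bound---appears to be calibrated precisely to provide enough spare vertices inside $L_j$ after removing these at most $\Delta$ forbidden neighbors, so that the internal BFS can still reach the depth needed to make the concatenated path have exactly $t$ vertices. Balancing the counting through this recursion, while ensuring the final path is both $v$-rooted and induced, is the technical heart of the argument.
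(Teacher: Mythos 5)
Your deep/shallow BFS split is a reasonable first instinct, and the deep case is fine: a shortest path from $v$ to a vertex at distance $t-1$ is induced and has $t$ vertices. But the shallow case, which you yourself flag as the crux, has a genuine gap that the vertex budget cannot close.

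Your counting puts only slightly more than $2\Delta$ vertices in the one dominant layer $L_j$. That is far too few to drive a recursion: the lemma you would need to apply inside $L_j$ to extract an induced path of the remaining required length $t-j$ demands roughly $(t-j+2)\Delta$ vertices in a single connected component of $G[L_j]$, which can be close to $(t+1)\Delta$. You cannot re-derive that from $\sim 2\Delta$. On top of that, lifting an internal path to a $v$-rooted induced path requires that the internal path avoid the neighborhoods of \emph{every} vertex of the shortest $v$-to-$u$ path, not just the parent of $u$; that forbids up to $j\Delta$ vertices of $L_j$, again more than your slack. And the fallback of finding an anti-adjacent pair entirely inside $L_j$ fails too: with max degree $\Delta$ and $\sim 2\Delta$ vertices, you cannot guarantee an anti-adjacent split into two sets of size $\Delta$. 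So the scheme does not yield either required output in the shallow case.

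The paper avoids BFS layers entirely and uses a direct greedy path-growing argument: assuming no anti-adjacent pair of two $\Delta$-sized sets exists, every set $C$ with $|C| \ge 3\Delta$ has a connected component of $G[C]$ of size more than $|C| - \Delta$. One then grows an induced path $v_1 = v, v_2, \ldots$ while maintaining the invariant that the current endpoint $v_i$ has a neighbor in the (large) dominant component $C_i$ of $G - N[\{v_1,\ldots,v_i\}]$. Since $|C_i| \ge (t+1-i)\Delta$, the process reaches length $t$. This invariant-based construction is both simpler and structurally different from a layered/recursive decomposition; you may want to look for an argument that directly certifies the extendability of the current path rather than trying to localize the obstruction to a single BFS layer.
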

\begin{proof}
Assume that the graph $G$ does not contain the pair $(A,B)$ as in the lemma statement. 
This implies that for every set $C \subseteq V(G)$ of size at least $3\Delta$, the largest connected component of $G[C]$ has more
than $|C|-\Delta$ vertices.

We construct an induced path on vertices $v_1,v_2,\ldots$ inductively, starting with $v_1 = v$.
Given an integer $1 \leq i < t$ and vertices $v_1,v_2,\ldots,v_i$ that induce a path in $G$, we define $Z_i = N[\{v_1,v_2,\ldots,v_i\}]$ and $C_i$ to be the vertex set of the largest connected 
component of $G-Z_i$. Note that $|Z_i| \leq 1 + \Delta i$, and thus $|V(G) \setminus Z_i| \geq 3\Delta$. Consequently, $|C_i| \geq |V(G)|-1-\Delta i-\Delta \geq (t+1-i)\Delta$.
In the first step, we pick $v_2$ to be a neighbor of $v_1$ that is adjacent to a vertex of $C_1$; such a vertex exists by the connectivity of $G$.
In subsequent steps, we maintain the invariant that $v_i$ is adjacent to a vertex of $C_{i-1}$. This allows us always to choose the next vertex $v_{i+1}$ within the neighborhood of $C_i$,
completing the construction.

Since $|C_{t-1}| \geq 2\Delta$, the construction process does not stop before defining the vertex $v_t$, which gives the desired $t$-vertex path starting in $v$.
\end{proof}


\section{Proof of main theorem}\label{sec:proof}
\newcommand{\ferntree}{\ensuremath{T}}
\newcommand{\bud}{B}

Let $T$ be a caterpillar and let 
$h_0$, $d_0$, $t_0 \geq 1$ be such that $T$ is an induced subgraph of $T_{h_0,d_0,t_0}$. 
Let $\ell=\ell(h_0, d_0)$ be a large enough integer 
and let $\eps=\eps(h_0, d_0,t_0)>0$ be small enough. 
(We will determine $\ell$ and $\eps$ later.)
Let $G_0$ be a graph that does not contain $T$ nor the complement of $T$ 
as an induced subgraph. 
We show that $G_0$ contains an $\eps_0$-pair where $\eps_0=\eps_0(\eps,\ell,T)$ is small enough. 
By Theorem~\ref{thm:fs}, there exists a constant 
$\delta = \delta(T, \eps/\ell)$ and a set $A \subseteq V(G_0)$
of size at least $\delta |V(G_0)|$ such that either $G_0[A]$ or the complement
of $G_0[A]$ has maximum degree at most $\eps|A|/\ell$. 
Let $G$ be equal to $G_0[A]$ or the complement of $G_0[A]$, so that $G$ has maximum degree
at most $\eps|A|/\ell$. 

We partition arbitrarily the vertex set $V(G)$ into $\ell$ parts $V_1\dot\cup\ldots\dot\cup V_\ell$ 
such that $\left|\size{V_i}-\size{V_j}\right|\leq 1$ for all $i,j$.
Now, either $G$ contains an $\eps/2\ell$-pair, in which case we are done, 
  or the following holds:
\begin{itemize}

\item[$(a)$]
$\deg(v,V_i)\leq \eps\size{V_i}$ for all $v\in V(G)$ and all $1\le i\le \ell$, and 

\item[$(b)$] there are no two sets $A\se V_i$, $B\se V_j$, for some $1\le i,j \le n$, 
such that $\size{A}\geq \eps\size{V_i}$, $\size{B}\geq \eps\size{V_j}$ and 
such that $(A,B)$ forms an anti-adjacent pair. 

\end{itemize}

By discarding at most $\ell$ vertices of $G$ we may in fact assume that 
$V_1\dot\cup\ldots\dot\cup V_\ell$ is an equipartition, i.e.~that $\size{V_i}=\size{V_j}$ for all $i,j\in[\ell]$ 
and thus $\size{V_i} = \size{V(G)}/\ell$. 
Call a graph $G'$ {\em $\ell$-coloured} if the vertex set of $G'$ is partitioned into $\ell$ (not necessarily independent) 
sets $V_1\dot\cup\ldots\dot\cup V_\ell$; and call an $\ell$-coloured graph $G'$ {\em $\eps$-clean} if it satisfies 
(a) and (b).

We prove by induction on the maximum degree $d_0$ that an $\ell$-coloured $\eps$-clean graph $G'$ 
contains the {\em spine} 
of a caterpillar which can be extended to an induced copy of $T_{d_0,h_0,t_0}$, 
as long as $\ell$ is big enough and $\eps$ is small enough. 
To be precise, we need some notation.

For $\alpha > 0$ a set $\bud\se V(G)$ is called an \emph{$\alpha$-bud} 
if $G[\bud]$ is connected 
and $|N(\bud) \cap V_j| \geq \alpha |V_j|$ for some $j \in [\ell]$. 
A family $\cA$ of subsets of $V(G)$ is called {\em colour-compatible} 
if for all $A\in \cA$ there is an index $i_A\in [\ell]$ such that 
$A\se V_{i_A}$ and the indices $i_A$ for $A \in \cA$ are pairwise distinct.


For integers $h \geq 1$, $d \geq 0$,
and a constant $\alpha > 0$, an 
\emph{$(\alpha,h,d)$-junior caterpillar (in the underlying $\ell$-coloured graph $G$)} 
is a pair $(P,\cB)$, where $P$ is an induced path in $G$ on $h$ vertices 
$V(P) = \{v_1,v_2,\ldots,v_h\}$ and 
$\cB$ is a family of pairwise anti-adjacent $\alpha$-buds 
$\{\bud_{i,j} : i \in [h], j \in [d]\}$ such that 
the family 
$$\cB \cup \{\{v_i\} : i \in [h]\}$$ 
is colour-compatible and 
$N(\bud_{i,j}) \cap V(P) = \{v_i\}$ for all $i \in [h]$, $j \in [d]$. 
Note that colour-compatibility implies in particular that 
the $\bud_{i,j}$'s are pairwise disjoint and that 
$\bud_{i,j} \cap V(P) = \emptyset$ for all $i,j$. 
See Figure~\ref{fig:junior} for an illustration. 
\begin{figure}[tb]
\begin{center}
\includegraphics{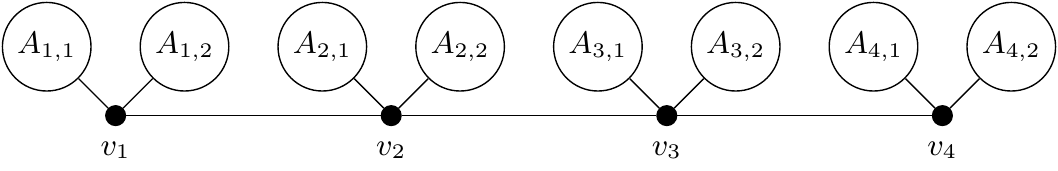}
\end{center}
\caption{A $(\alpha, h, d)$-junior caterpillar for $h=4$ and $d=2$.
Buds are depicted by empty circles.
  The buds are disjoint and pairwise anti-adjacent.
  The edges between buds and the path represent at least one edge;
a bud is not allowed to neighbor any other vertex $v_i$ on the path.}\label{fig:junior}
\end{figure}
We prove the following. 
\begin{lemma}\thlab{lem:main}
For all integers $h \geq 1$ and $d \geq 0$
there exist a constant $\alpha > 0$ and an integer $\ell_0 \geq 1$
such that every $\ell_0$-coloured $\alpha$-clean graph $G'$ contains 
an $(\alpha, h, d)$-junior caterpillar. 
\end{lemma}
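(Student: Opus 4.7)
The plan is to induct on $d \geq 0$, with $\alpha$ shrinking and $\ell_0$ growing rapidly at each step. For the base case $d = 0$, the task is to find an induced path $v_1, \ldots, v_h$ whose vertices lie in $h$ pairwise distinct color classes. The difficulty is that condition~(a) caps each vertex's degree into each $V_k$ by $\alpha|V_k|$, so a greedy vertex-by-vertex extension fails already after two steps---the union of forbidden neighborhoods of earlier path vertices can entirely cover the current endpoint's neighborhood in the next target class. I would instead work inside the multipartite subgraph $M \se G'$ obtained by deleting every edge lying inside a single color class. An argument analogous to the large-component-per-class argument (using (b) plus pigeonholing of disjoint subsets onto color classes) shows that $M$ has a connected component on at least $(1 - O(\ell\alpha))|V(G')|$ vertices; since $M$ has maximum degree at most $\ell\alpha|V_1|$, Lemma~\ref{lem:blt} applied inside this component yields an induced path of arbitrary length $T$, with consecutive vertices in distinct color classes by construction. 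I then extract an induced subpath of $h$ consecutive vertices in $h$ distinct colors by a Ramsey-type argument on the color sequence, restricting $M$ to carefully chosen unions of color classes to force color variation where needed.

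For the inductive step $d \to d+1$, I apply the inductive hypothesis with $\alpha_0 \gg \alpha$ and a smaller $\ell_0' \ll \ell_0$ to obtain an $(\alpha_0, h, d)$-junior caterpillar $(P, \cB_0)$ using $h(d+1)$ of the color classes of $G'$. I then reserve $h$ further fresh color classes $V_{q_1}, \ldots, V_{q_h}$ to host the new buds $\bud_{1, d+1}, \ldots, \bud_{h, d+1}$. For each $i \in [h]$ in turn, I define a \emph{safe region} $S_i \se V_{q_i}$ by deleting from $V_{q_i}$ the neighborhoods $N(V(P) \sm \{v_i\}) \cap V_{q_i}$ (bounded by $(h-1)\alpha|V_{q_i}|$ via~(a)) and $N(\bud') \cap V_{q_i}$ for every previously constructed bud $\bud'$; then inside $S_i$ I find a connected subset that contains a neighbor of $v_i$ and has neighborhood of size at least $\alpha|V_j|$ in some color class $V_j$, to serve as the new bud $\bud_{i, d+1}$.

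The main obstacle will be keeping $S_i$ large in the face of condition~(b): any \emph{large} bud $\bud' \se V_{k'}$ automatically satisfies $|N(\bud') \cap V_{q_i}| \geq (1 - \alpha_0)|V_{q_i}|$, so naively the $\Theta(hd)$ prior buds could wipe out $S_i$ entirely. I plan to overcome this by strengthening the inductive hypothesis to additionally demand that every bud in the produced caterpillar has size bounded by a constant $C = C(h,d)$ depending only on $h$ and $d$. By~(a), such bounded buds satisfy $|N(\bud') \cap V_{q_i}| \leq C\alpha_0|V_{q_i}|$, and a union bound leaves $|S_i|$ a linear fraction of $|V_{q_i}|$ provided $\alpha_0$ is chosen small enough. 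Finding a bounded-size connected bud inside $S_i$ anchored at $v_i$ and with enough neighbors in some color class will then follow from a local application of Theorem~\ref{thm:fs} or Lemma~\ref{lem:blt}, combined with the pseudo-randomness between color classes inherent in~(b).
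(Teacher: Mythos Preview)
The inductive step has a genuine gap. After you obtain the $(\alpha_0,h,d)$-junior caterpillar $(P,\cB_0)$ from the inductive hypothesis, each $v_i$ is a \emph{single fixed vertex}, and nothing in the definition of an $\alpha$-clean graph forces a single vertex to have any neighbour in a given colour class. Condition~(a) is only an upper bound on $\deg(v_i,V_{q_i})$, and condition~(b) only excludes anti-adjacent pairs $(A,B)$ with $|A|\ge \alpha|V_i|$ on each side; taking $A=\{v_i\}$ gives no information once $|V_i|>1/\alpha$. So it is perfectly consistent with $\alpha$-cleanness that $\deg(v_i,V_{q_i})=0$, and then your safe region $S_i$ contains no neighbour of $v_i$ and the new bud $\bud_{i,d+1}$ cannot be anchored at $v_i$ at all. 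Strengthening the hypothesis to bounded-size buds does not help here: the problem is not that prior buds eat $S_i$, it is that $v_i$ may simply be isolated from $V_{q_i}$.

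This is exactly why the paper reverses the order of construction. Instead of fixing the path first and then attaching buds, it first runs an iterative process that grows a family of \emph{ferns} on linear-size subsets $W_i^k\se V_i$, maintaining the invariant that the root bud $A$ of each fern satisfies $W_i^k\se N(A)$. Only afterwards does it apply the inductive hypothesis \emph{inside} the sets $W_i^k$ to produce the path $P$; since each $v_i$ then lies in some $W_{a(i)}^k$, adjacency $v_i\in N(\bud_{i,d})$ is automatic. The extra bud per path vertex is thus prepared before the path vertex is even chosen. Your base case is also shaky for a related reason: a long induced path in the multipartite graph $M$ can alternate between two colours indefinitely, so ``a Ramsey-type argument on the colour sequence'' will not in general yield $h$ consecutive vertices in $h$ distinct classes; the paper handles $d=0$ with the same fern machinery rather than by a direct path argument.
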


Before proving the lemma let us show how this finishes the proof of \thref{thm:main}. 
By choosing $\ell \ge \ell_0(h_0,d_0)$ and $\eps < \alpha(h_0,d_0)$, 
where $\ell_0$ and $\alpha$ are the constants given by \thref{lem:main}, 
we may assume that $G$ contains an $(\alpha, h_0, d_0)$-junior caterpillar $(P,\cB)$. 
(We may fix $\ell = \ell_0(h_0,d_0)$ at this point, but we do need further restrictions on $\eps$.) 

Note that by definition, $P$ is an induced path and $\cB$ is a family of $h_0d_0$ pairwise disjoint 
$\alpha$-buds. 
%
%
%
Let $\bud_0^k$ for $k \in [h_0d_0]$ be an arbitrary enumeration of the buds $\bud\in \cB$ 
and let $v^k$ be the unique vertex of  $V(P)$ in $N(\bud_0^k)$. 
For every $k \in [h_0d_0]$, let $\bud^k$ be a minimal subset of $\bud_0^k$ with the following properties: 
$G[\bud^k]$ is connected, $v^k \in N(\bud^k)$, and 
$$|N[\bud^k]| \geq 10 h_0d_0t_02^k  \eps \size{V(G)}.$$ 
We claim that $\bud^k$ exists for all $k$. 
Indeed, since $\bud_0^k$ is an $\alpha$-bud we have that 
$\size{N[\bud_0^k]} \ge \alpha \size{V_j} \ge  \alpha \size{V(G)}/\ell$ for some $j\in [\ell]$. 
Hence, we can pick $ \bud^k$ vertex by vertex, starting with a neighbour of $v^k$, keeping 
$\bud^k$ connected, until $\size{N[\bud^k]}\ge 10 h_0d_0t_02^k  \eps \size{V(G)}$, 
which is possible if $\eps$ is chosen such that 
\begin{equation}\label{eq:epsCond}
10 h_0d_0t_02^k \ell \eps <  \alpha.
\end{equation}
We now show that $\size{N(\bud^k)}$ is not too big either. 
First note that 
by the minimality of $\bud^k$ and the bound on the maximum degree, we have that 
$$\size{N[\bud^k]} \leq \left(10 h_0d_0t_02^k +1\right) \eps \size{V(G)} < |V(G)|/2.$$ 
Now $(\bud^k, V(G)\setminus N[\bud^k])$ is an anti-adjacent pair. Hence, we can assume that 
$\size{\bud^k} < \eps \size{V(G)}$ for all $k \in [h_0d_0]$. 
It follows that for every $k \in [h_0d_0]$ we have that 
\begin{equation}\lab{eq:budk} 
\left(10 h_0d_0t_02^k -1 \right) \eps \size{V(G)} \leq \size{N(\bud^k)} \leq \size{N[\bud^k]} \leq  \left(10 h_0d_0t_02^k +1\right) \eps \size{V(G)}. 
\end{equation}
Therefore, for all $k \in [h_0d_0]$, 
\begin{align*}
\size{\bigcup_{j < k} N[\bud^j] \cup N[V(P)]} 
	&\le  \sum_{j < k} \size{N[\bud^j]}  + \size{N[V(P)]} \\
	&\le \left( (2^k-2) 10 h_0d_0t_0 + k + 2h_0\right) \eps \size{V(G)}\\
	&\le (2^k-1) 10 h_0d_0t_0 \eps \size{V(G)}, 
\end{align*}
where we used that $\size{N[V(P)]} \leq 2h_0\cdot \eps|V(G)|$ by the bound on the maximum degree, 
and that $k+2h_0 \le 3h_0d_0t_0.$ 
Hence, the sets $C^k$ defined by 
$$C^k = N(\bud^k) \setminus \left(\bigcup_{j < k} N[\bud^j] \cup N[V(P)]\right)$$ 
satisfy 
\begin{equation}\label{eq:Ck}
\size{C^k} \geq 9 h_0d_0t_0  \eps \size{V(G)}, 
\end{equation}
using \eqref{eq:budk} as well. 
By definition, 
$C^k \se N(\bud^k)$ 
and $C^k$ is anti-adjacent to $\bud^j$ for all $1 \leq j < k \leq h_0d_0$. 

Now, for every $k \in [h_0d_0]$ in \emph{decreasing order}, we define an induced path $P^k$ on $t_0+1$ vertices with 
$V(P^k)\se  \{v^k\} \cup \bud^k \cup C^k$ and endpoint $v^k$ as follows. 
First, we define
$$D^k = \{v^k\} \cup \left(\left(\bud^k \cup C^k\right) \setminus \bigcup_{j > k} N[V(P^j) \setminus \{v^j\}]\right).$$
Since $\bud^k$ and $A^j\cup C^j$ are anti-adjacent for $1 \leq k < j \leq h_0d_0$ and since 
$V(P^j)\se  \{v^j\} \cup \bud^j \cup C^j$, 
we have that $\{v^k\} \cup \bud^k \subseteq D^k$ and, consequently, $G[D^k]$ is connected. 
Note that 
$$\size{\bigcup_{j > k} N[V(P^j) \setminus \{v^j\}]}\leq  h_0d_0t_0 \eps  \size{V(G)}$$ 
so that 
$$\size{D^k} \geq 8 h_0d_0t_0 \eps\size{V(G)},$$
by~\eqref{eq:Ck}. 
Finally, since the maximum degree in $G[D^k]$ is at most $\eps \size{V(G)}\leq \size{D^k}/(t_0+2)$, 
we can apply \thref{lem:blt} to $G[D^k]$ to find an induced path $P^k$ on $t_0+1$ vertices rooted in $v^k$. 

From the construction, we infer that every path $P^k$ is induced with an endpoint
in $v^k$, $v^k$ is the unique
vertex of $V(P) \cap N[V(P^k) \setminus \{v^k\}]$, and $V(P^k)\setminus \{v^k\}$
and $V(P^j) \setminus \{v^j\}$ are anti-adjacent for every $1 \leq k < j \leq h_0d_0$.
Consequently, $V(P) \cup \bigcup_{k \in [h_0d_0]} V(P^k)$ induces the 
caterpillar $T_{h_0,d_0,t_0}$ in $G$. 

This finishes the proof of \thref{thm:main}. It remains to prove \thref{lem:main}.

\medskip 

We remark that the above proof does not use the assumptions on the parts of a 
junior caterpillar being colour-compatible; colour-compatibility will be used only 
in the inductive step in the proof of \thref{lem:main}.  
In that proof we will iteratively find a more general structure which we call ferns. 
If such a fern is ``deep'' enough then it is easy to see that it contains the desired junior caterpillar. 
Otherwise, we construct the junior caterpillar inductively. 

Let us recall some notation. 
A \emph{rooted tree} is a tree $T$ with one vertex specified as the \emph{root}. 
We refer to vertices of such trees as \emph{nodes} as the rooted trees in this proof are 
auxiliary structures outside of the graph we are working with. 
For a node $t \in V(T)$, the subtree of $T$ rooted at $t$ is denoted by $T_t$. 
If $s$ is the neighbour of $t$ in $T$ on the path from the root to $t$ then $s$ is called 
{\em the parent of $t$} whereas $t$ is called {\em a child of $s$.} 
The \emph{height} of a rooted tree $T$ is the maximum number of edges on a path from the root 
to a leaf.

For a constant $\alpha > 0$ and an integer $d \geq 1$, an \emph{$(\alpha, d)$-fern} (in an underlying coloured graph $G'$) is a pair $F=(\ferntree,\cB)$ where 
$\ferntree$ is a rooted tree in which every 
internal node has exactly $d$ children, 
and where $\cB=\{\bud_t : t\in V(\ferntree)\}$ is a colour-compatible family of 
$\alpha$-buds, one for each node in $\ferntree$, that are pairwise anti-adjacent 
unless $s$ is a parent of $t$ in $\ferntree$ in which case $A_s\se N(A_t)$, 
i.e.~every $v\in A_s$ has a neighbour in $A_t$. 
For a given fern $F=(\ferntree,\cB)$, we refer to $\ferntree$ as $\ferntree(F)$, 
to $\cB$ as $\cB(F)$, and we set $V(F) = \bigcup_{\bud\in\cB}\bud$ to be the vertex 
set on which the fern lives. 
The \emph{height} of a fern $(\ferntree,\cB)$ is the height of the tree $\ferntree$. See Figure~\ref{fig:fern} for an illustration.
\begin{figure}[tb]
\begin{center}
\includegraphics{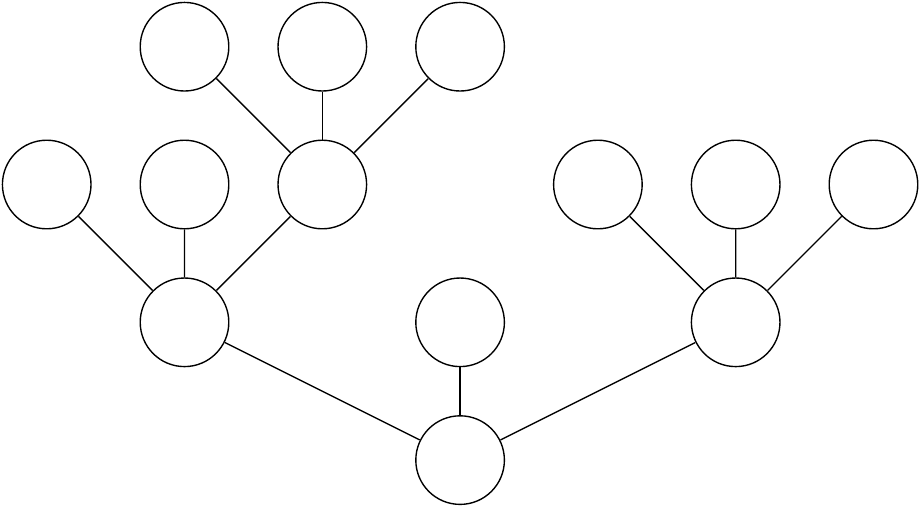}
\end{center}
\caption{A $(\alpha, d)$-fern for $d=3$.
Buds are depicted by empty circles.
  A line connecting two buds represents that every vertex from a lower bud has at least one neighbor in the upper bud.
  All other pairs of buds are anti-adjacent.}\label{fig:fern}
\end{figure}
Finally, we say that a fern $(\ferntree, \cB)$ \emph{grows} on a set $Z \subseteq V(G')$ 
if $Z$ is disjoint from every bud $\bud_t\in\cB$, 
$Z$ is anti-adjacent to every bud $\bud_t$ for non-root nodes $t$, 
and $Z \subseteq N(\bud_r)$ for the root $r$ of $\ferntree$. 

It is easy to observe that a high enough fern contains a junior caterpillar.
\begin{lemma}\thlab{lem:fern2cat}
Let $\ell,h \geq 1$ and $d \geq 0$ be integers, and let $\alpha > 0$ be a constant.
If an $\ell$-coloured graph $G'$ contains an $(\alpha,d+1)$-fern of height at least $h$, 
then $G'$ contains an $(\alpha,h,d)$-junior caterpillar. 
\end{lemma}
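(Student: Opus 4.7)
The plan is to unpack the definitions directly: a fern of height $\geq h$ already contains the skeleton of a junior caterpillar if we simply restrict attention to a single root-to-leaf branch and reserve the remaining children at each level to serve as the attached buds.

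Concretely, I would first fix a root-to-leaf path in $\ferntree(F)$ with at least $h$ edges, giving nodes $u_0,u_1,\ldots,u_h$ where $u_0$ is the root. The nodes $u_0,\ldots,u_{h-1}$ are all internal and hence each has exactly $d+1$ children in the $(\alpha,d+1)$-fern. Relabel $t_i := u_{i-1}$ for $i\in[h]$. Now build the path vertices $v_1,\ldots,v_h$ greedily: pick $v_1\in \bud_{t_1}$ arbitrarily, and inductively, using that $t_{i-1}$ is the parent of $t_i$ so $\bud_{t_{i-1}}\subseteq N(\bud_{t_i})$, choose $v_i\in \bud_{t_i}$ to be a neighbour of $v_{i-1}$. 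Finally, for each $i\in[h]$, let $c_{i,1},\ldots,c_{i,d}$ be any $d$ children of $t_i$ other than $t_{i+1}$ (for $i=h$, any $d$ of the $d+1$ children), and set $\bud_{i,j}:=\bud_{c_{i,j}}$.

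To finish I would verify the three conditions of a junior caterpillar. For $P=v_1\cdots v_h$ being an induced path, consecutive edges are present by construction; for $|i-i'|\geq 2$ the nodes $t_i,t_{i'}$ are not in a parent-child relation in $\ferntree$ (they are grandparent/descendant or further along the branch), so $\bud_{t_i}$ and $\bud_{t_{i'}}$ are anti-adjacent in the fern, hence $v_iv_{i'}\notin E(G')$. For the family $\cB\cup\{\{v_i\}\}$ being colour-compatible, observe that the nodes $\{t_i\}\cup\{c_{i,j}\}$ are pairwise distinct, so by colour-compatibility of the fern the buds $\bud_{t_i}$ (and therefore the singletons $\{v_i\}\subseteq \bud_{t_i}$) and $\bud_{c_{i,j}}$ land in pairwise distinct colour classes $V_k$. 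Pairwise anti-adjacency of the chosen buds $\bud_{c_{i,j}}$ follows because no two of the $c_{i,j}$ stand in parent-child relation in $\ferntree$ (each $c_{i,j}$ is a leaf of the subtree we are using, save for subtrees we never look at).

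The final neighbourhood condition $N(\bud_{i,j})\cap V(P)=\{v_i\}$ is the one place to be a touch careful, but it is immediate: $v_i\in \bud_{t_i}\subseteq N(\bud_{c_{i,j}})$ since $t_i$ is the parent of $c_{i,j}$, whereas for $i'\neq i$ the node $t_{i'}$ is neither parent nor child of $c_{i,j}$, so $\bud_{t_{i'}}$ and $\bud_{c_{i,j}}$ are anti-adjacent and in particular $v_{i'}\notin N(\bud_{c_{i,j}})$. Since each $\bud_{c_{i,j}}$ is itself an $\alpha$-bud (it was one in $F$) and the buds are pairwise disjoint by colour-compatibility, $(P,\{\bud_{i,j}\})$ is an $(\alpha,h,d)$-junior caterpillar. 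There is no serious obstacle here; the only point requiring attention is ensuring that when selecting the $d$ auxiliary children of $t_i$ we avoid the child $t_{i+1}$ that continues the spine, which is always possible because internal nodes have $d+1$ children.
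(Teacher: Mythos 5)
Your proposal is correct and follows essentially the same route as the paper's proof: select a root-to-leaf path in the fern's tree, greedily thread the spine $v_1,\ldots,v_h$ through the corresponding buds using the parent-to-child adjacency guarantee, and take the $d$ remaining child buds at each level as the attached $\alpha$-buds, with the verification of induced-path, anti-adjacency, colour-compatibility and $N(\bud_{i,j})\cap V(P)=\{v_i\}$ all reducing to the parent-child structure of the auxiliary tree. The only cosmetic difference is your indexing and the slight relaxation at $i=h$ (picking any $d$ of the $d+1$ children rather than always omitting the spine's next node), which is also sound.
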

\begin{proof}
Let $(\ferntree, \cB)$ be an $(\alpha,d+1)$-fern of height at least $h$. 
By the assumption on height, there exists a path $P$ in $\ferntree$, say on nodes  
$\{t_1,t_2,\ldots,t_{h+1}\}$, such that $t_1$ is the root of $T$ and $t_{i+1}$ is a 
child of $t_i$ for all $i\in [h]$. 
Iteratively, for every $i \in [h]$ in increasing order, 
pick $v_i \in \bud_{t_i}$ such that $v_iv_{i-1} \in E(G)$ for $i > 1$;
this is always possible since $N(\bud_{t_i}) \supseteq \bud_{t_{i-1}}$. 
Observe that the adjacency conditions on a fern imply that $\{v_1,v_2,\ldots,v_h\}$
induce a path in $G'$ with vertices in this order.
To complete the construction of a junior caterpillar,
for every $i \in [h]$, define $\alpha$-buds $\bud_{i,j}$, $j \in [d]$, to be the $d$ buds 
$\bud_s$ for children $s$ of $t_i$ except for $t_{i+1}$. 
Finally, observe that the adjacency conditions on a fern imply that $\bud_{i,j}$ are pairwise
anti-adjacent with $N(\bud_{i,j}) \cap \{v_1,v_2,\ldots,v_h\} = \{v_i\}$, while 
colour-compatibility of $\cB$ implies colour-compatibility of 
$\{B_{i,j} : i \in [h], j \in [d]\} \cup V(P)$. 
\end{proof}

%
%
We are now ready to prove our main technical lemma.
\begin{proof}[Proof of \thref{lem:main}]
We prove the statement by induction on $d$. Fix integers $h \geq 1$ and $d \geq 0$
and, if $d > 0$, assume that the statement is true for $h$ and $d-1$ with parameters
$\alpha'=\alpha(h, d-1) > 0$ and $\ell'=\ell_{0}(h, d-1)$.
%
Let $G'$ be an $\ell_0$-coloured $\alpha$-clean graph 
where 
\begin{align}
\ell_0  &= h+ \ell'  \cdot \left( (d+1)^{h+1} +1\right)\label{eq:def-ell}\\
\alpha &= 3^{-\ell_{0}} \cdot \alpha' / 10,\label{eq:def-eps}
\end{align}
and where, for brevity, we set $\alpha'=\ell'=1$ in the case $d=0$. 
Assume that $V_1, \ldots, V_{\ell_0}$ are the colour classes of $G'$. 

We now describe an iterative process in which we find a family $\cF$ of ferns in $G'$,  
increasing the total number of buds by 1 in each step. 
For $k=0,1,\ldots$, we define a set $I^k \se I^{k-1}\se\ldots\se [\ell]$ of $\ell - k$ active colours, 
subsets $W^k_i \se V_i$ for all $i\in I^k$, 
and a family $\cF^k$ of $(\alpha, d+1)$-ferns, 
each one growing on $W^k_i$ for some $i\in I^k$. 
See Figure~\ref{fig:induction} for an illustration. 
\begin{figure}[tb]
\begin{center}
\includegraphics{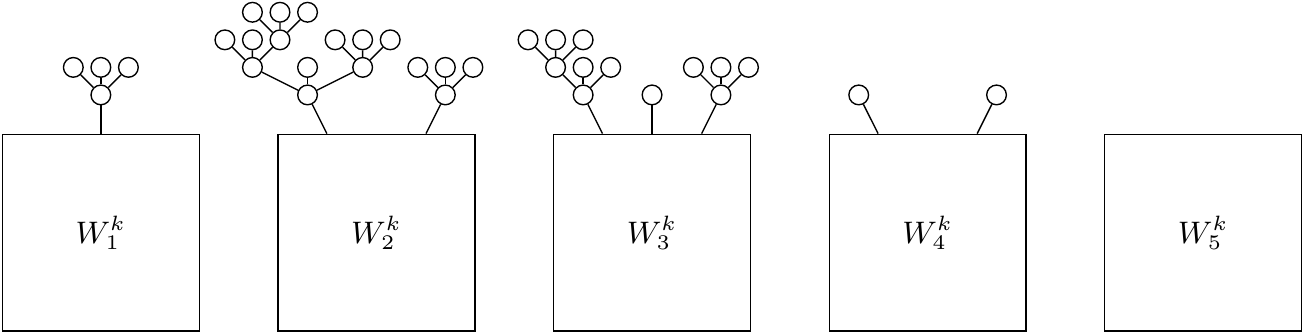}
\end{center}
\caption{A state after $k$-th step of the inductive process for $d=2$.
As before, buds are represented by circles and an edge between two sets 
means that every vertex in the lower set has at least one neighbor in the upper set.
Rectangles depict sets $W_i^k$ which are of size linear in the size of their original color classes.
There may be edges between different sets $W_i^k$; except for these and the drawn edges,
every other pair of depicted sets is anti-adjacent.
Next step will be performed inside the set $a=3$, as this set has $d+1$ ferns growing on it.}\label{fig:induction}
\end{figure}
In each step, let $\cF_i^k$ be the set of all those ferns $F\in \cF^k$ that grow on $W^k_i$, 
and let $\cB^k=\bigcup_{F\in \cF^k} \cB(F)$ be the collection of all buds in all ferns of 
$\cF^k$. 


\bigskip
\noindent
{\bf The procedure.\\}
We set $I^0 = [\ell]$, $W^0_i = V_i$, and $\cF_i^0 = \emptyset$ for every $i \in I^0$. 
Now assume that for $k\geq 0$ we have sets $I^k$, $(W^k_i)_{i\in I^k}$, 
and a family $\cF^k$ of ferns as above. 

\medskip

\noindent
We stop the process if for all 
$i\in I^k$ we have that $0< \size{\cF_i^k} < d+1$, 
or if $\size{I^k}=1$. 

\medskip

\noindent
Otherwise, we define $I^{k+1}$, $(W^{k+1}_i)_{i\in I^{k+1}}$, 
and $\cF^{k+1}$ as follows. 
Pick an index $a \in I^k$ as follows. 
If there exists a set $\cF_i^k$ of size $d+1$, pick $a = i$; 
otherwise, pick $a$ so that $\cF^k_a = \emptyset$. 
Now, let $A_0$ be the vertex set of the largest connected component of $G'[W^k_a]$.
Let $A \subseteq A_0$ be a minimal subset 
such that $G'[A]$ is connected and there exists an index $b \in I^k \setminus \{a\}$ 
with $|N(A) \cap W^k_b| \geq |W^k_b|/3$.
Set 
\begin{align*}
I^{k+1} & = I^k \setminus \{a\},\\
W^{k+1}_b &= W^k_b \cap N(A),\\
W^{k+1}_i &= W^k_i \setminus N(A) \text{ for every } i \in I^{k+1} \setminus \{b\}\text{, and} \\
\cF_i^{k+1} &= \cF_i^k \text{ for every } i \in I^{k+1} \setminus \{b\}.
\end{align*}
Finally, set 
$\cF_b^{k+1} = \cF_b^k \cup \{F_A\}$ for a fern $F_A=(\ferntree, \cB_A)$ defined as follows. For $f = |\cF_a^k| \in \{0,d+1\}$, suppose that  
$F_1,F_2,\ldots,F_f$ are the ferns in $\cF_a^k$ (growing on $W_a^k$), 
and suppose that $F_i = (\ferntree_i, \cB_i)$ for $i \in [f]$, 
and let $r_i$ be the root of $\ferntree_i$. 
We create a new node $r$ to be the root of $\ferntree$, associate with it the set $A$, 
and append all trees $T_i$ as subtrees rooted at the children of $r$; 
that is, every node $r_i$ becomes a child of $r$. 
We then define $\cB_A = \{ A\} \cup \bigcup_{i \in [f]} \cB_i$. 

\bigskip

Note that for $k=0$ and for the initial choices $I^0$, $W^0_i$, and $\cF^k$, 
the following hold trivially.  
\begin{enumerate}
\item $|W^k_i| \geq 3^{-k} |V_i|$.

\item Every $F\in \cF^k$ is indeed an $(\alpha, d+1)$-fern, 
growing on $W^k_i$ for some $i\in I^k$. 

\item $\size{\cB^k} = k$ and $\cB^k\cup \{W_i^k : i \in I^k\}$ is colour-compatible. 
\item $W^k_i$ is anti-adjacent to all buds $\bud \in \cB(F)$ of ferns $F\in \cF^k\sm \cF_i^k$.
\item $V(F)$ and $V(F')$ are anti-adjacent for all distinct ferns $F,F'\in \cF$. 
\item $\size{\cF_i^k}\leq d+1$, and $\size{\cF_i^k} = d+1$ for at most one family $\cF_i^k$.
\end{enumerate}

Assume now that for $k\geq 0$ we have sets $I^k$, $(W^k_i)_{i\in I^k}$, 
and a family $\cF^k$ of ferns that satisfy the conditions 1.-6. 
We claim that, unless the process stops, 
the procedure is well-defined and that the properties 1.-6.~hold for 
$I^{k+1}$, $(W^{k+1}_i)_{i\in I^{k+1}}$, and $\cF^{k+1}$.  

Note that $k<\ell-1$ since otherwise the process would stop (since $\size{I^k}=\ell-k$). 
Furthermore, we can assume that $\size{\cF_i^k} \in\{0,d+1\}$ for some 
$i\in I^k$ by the stopping condition, and hence the index $a$ is well-defined. 
Note also that there is at most one family with $\size{\cF_i^k} = d+1$ by 6., and that the 
index $a$ picks this unique family if it exists. 
By 1., 
$\size{W^k_a} \geq 3^{-k} \size{V_i}$, and hence, 
$\size{A_0} > \size{W^k_a}/2$, as otherwise 
$G'$ contains an anti-adjacent pair $(U,W)$, 
both $U,W\se V_a$ and of size at least 
$\size{V_a}/2\cdot 3^k > \alpha \size{V_a}$, 
contradicting $G'$ being $\alpha$-clean. 
This then implies that 
$\size{N(A_0) \cap W^k_i} \geq \size{W^k_i}/2$ for all $i\in I^k \setminus \{a\}$ 
as otherwise $(A_0, W^k_i \setminus N(A_0))$ 
would be an anti-adjacent pair contradicting $G'$ being 
$\alpha$-clean. 
Consequently, since $I^k \setminus \{a\} \neq \emptyset$, 
the set $A$ as defined exists. 
It follows that $I^{k+1}$, $(W^{k+1}_i)_{i\in I^{k+1}}$ and $(\cF_i^{k+1})_{i\in I^{k+1}}$ 
are well-defined. 

By definition, $\size{N(A) \cap W^k_b} \geq |W^k_b|/3\ge 3^{-{(k+1)}}\size{V_b}$. 
Furthermore, we have 
that 
$\size{N(A) \cap W^k_i} \le  \size{W^k_i}/3 + \alpha \size{V_i} \leq 2\size{W^k_i}/3$
for all $i\in I^{k+1}\sm \{b\}$, 
by minimality of $A$ and the upper bound on $\deg(v,V_i)$ for every 
$v\in A$ (by part (a) of the definition of $G'$ being $\alpha$-clean). 
Hence, for all $i \in I^{k+1}$, $\size{W^{k+1}_i} \geq \size{W^k_i}/3 \geq 3^{-{(k+1)}}\size{V_i}$, and thus 1.~holds. 

We now show that $F_A=(\ferntree, \cB_A)$ is an $(\alpha,d+1)$-fern. By definition, $T$ is a rooted tree. If $\cF_a^k=\emptyset$ then 
$T$ consists of a single node $r$ (which is the root). Otherwise, $\size{\cF_a^k}=d+1$ 
and the trees $T_1,\ldots,T_{d+1}$ are appended as subtrees rooted at the children of $r$. Thus, the degree of $r$ is $d+1$ in $T$. 
The family $\cB_A = \{ A\} \cup \bigcup_{i \in [f]} \cB_i$ is colour-compatible 
since $A\se W_a^k$ and $\cB^k\cup \{W_i^k: i\in I^k\}$ is colour-compatible by 3. 
Furthermore, $A$ is an $\alpha$-bud since 
$\size{N(A) \cap W^k_b} \geq \alpha \size{V_b}$. 
Note that $A$ is anti-adjacent to $\bud$ for any bud 
$\bud\in \bigcup_{i \in [f]} \cB_i$ unless $\bud$ is a bud corresponding to 
the root of $T_i$ for some $i \in [f]$. 
This is because $A\se W_a^k$ and each $F_i$ grows on $W_a^k$. 
For the same reason, $A\se N(\bud)$ for every $\bud$ corresponding to 
the root of $T_i$ for some $i \in [f]$. 

We now claim that $F_A$ grows on $W_b^{k+1}$. 
Indeed, $W_b^{k+1}\se W_b^k$ is disjoint from $V(F_A)$ 
since $\cB^k\cup \{W_i^k: i\in I^k\}$ is colour-compatible by 3. 
By definition, $W_b^{k+1}\se N(A)$. 
Finally, $W_b^{k+1}\se W_b^k$ is anti-adjacent to $\bud$ for any 
$\bud\in \bigcup_{i \in [f]} \cB_i$ since $F_i\in \cF_a^k$ and by 4. 
This proves 2.~for $k+1$.  
For 3.~note that $\cB^{k+1} = \cB^{k}\cup \{A\}$ and that 
colour-compatibility of $\cB^{k+1} \cup \{W_i^{k+1} : i \in I^{k+1}\}$ 
follows from $W_i^{k+1}\se W_i^k$ for all $i\in I^{k+1}$, 
from $A\se W_a^k$, and from colour-compatibility of 
$\cB^{k} \cup \{W_i^{k} : i \in I^{k}\}$.  
For 4., note that $(W_i^{k+1}, V(\bud))$ is anti-adjacent 
for all $i\in I^{k+1}$, $\bud\in \cB\left(\cF^{k+1}-(\cF_i^{k+1}\cup \{A\})\right)$, 
by 4.~for $k$. 
By definition, $A$ is anti-adjacent to all $W_i^{k+1}$ for $i\neq b$ 
and 4.~follows. 
Similarly, we only need to check 5.~for $A$. But this follows 
since $A\se W_a^k$ and from 4.~for $k$. 
Finally, in step $k+1$ we add 1 fern to the family $\cF_b^k$ 
growing on $W_b^{k+1}$ (and to no other family), and the 
choice of $a$ guarantees that there is at most 
one family $\cF_i^{k+1}$, $i \in I^{k+1}$, with exactly $d+1$ ferns. 

\medskip

%

Let us consider the case when $d=0$ first. Note that 
for every $k\ge 0$, every family $\cF_i^k$ has size 0 or 1, for 
$i\in I^k$. So the process only stops after step $k=\ell_0-1$ when 
$\size{I^k}=1$ and the process produces a fern in which 
the auxiliary tree is a path of length $\ell_0 -1 \geq h$. 
That is, the fern is an $(\alpha,1)$-fern of height at least $h$, 
which contains an $(\alpha,h,0)$-junior caterpillar by \thref{lem:fern2cat}. 
This proves the statement for $d=0$. 

For $d\ge 1$, 
assume now that the process stops at step $k$ for some $0\leq k\leq \ell_0$, 
that is, $\size{I^k} = 1$ or $1\leq \size{\cF_i^k}\leq d$ for all $i\in I^k$. 
If there exists a fern $F \in \cF^k$ of height at least $h$ then 
$G'$ contains an $(\alpha, h, d)$-junior caterpillar by \thref{lem:fern2cat}, 
and we are done.
So we may assume that every fern $F \in \cF^k$ is of height less than $h$, 
and thus it contains at most $(d+1)^h$ buds. 
Since every set $\cF_i^k$ is of size at most $d$, 
and the total number of buds in $\cB$ is exactly $k$, 
we have that
$k =\size{\cB}\leq \size{I^k}  (d+1)^{h+1}.$  
Now since $k= \ell_0 - \size{I^k}$ as well, we infer that 
$\size{I^k}\geq \ell_0/\left((d+1)^{h+1}+1\right) \ge \ell',$
by~\eqref{eq:def-ell}. 

Pick any set $I \subseteq I^k$ of size exactly $\ell'$ and consider an $\ell'$-coloured 
graph $H = G'[\bigcup_{i \in I} W^k_i]$ with colour classes $W^k_i$, $i \in I$.
Since $G'$ is $\alpha$-clean and $\size{W^k_i} \geq 3^{-k} \size{V_i} \geq 3^{-\ell_0} \size{V_i}$, we have
that $H$ is $\alpha'$-clean by~\eqref{eq:def-eps}.
By induction, there exists an $(\alpha', h, d-1)$-junior caterpillar $(P, \cB')$ in $H$. 
Let $V(P) = \{v_1,v_2,\ldots,v_h\}$ 
and let $\cB' = \{B_{i,j} : i\in [h], j\in [d-1]\}$ be the family of $\alpha'$-buds 
so that $\cB'\cup\{\{v_i\} : i\in [h]\}$ is colour-compatible in $H$ and so that 
$\{ B_{i,j} : j\in [d-1]\}$ is the set of {\em private} $\alpha'$-buds for vertex $v_i$ (in $H$). 
Note that an $\alpha'$-bud in $H$ is an $\alpha$-bud in $G'$, since 
$\alpha'\size{W^k_i} \geq \alpha \size{V_i}$. 
Note also that colour-compatibility in $H$ implies colour-compatibility in $G'$. 

For $i \in [h]$, let $a(i)$ be such that $v_i \in W^k_{a(i)}$, 
let $F_{i}$ be any fern in $\cF^k_{a(i)}$ 
(which exists as $\size{\cF^k_{a(i)}}\ge 1$), 
and let $B_{i,d}\in \cB(F_i)$ be the bud associated with the root node of $F_i$. 
Note that $B_{i,d}$ is anti-adjacent to all $W_{j}^k$ for $i\neq a(i)$, 
by Property 4, and hence it is anti adjacent to all $B_{i',j}$ for $i'\neq i$, all $j\in [d-1]$. 
Furthermore, $N(B_{i,d})\cap V(P) = \{v_i\}$, and $B_{i,d}$ is anti-adjacent 
to $B_{i',d}$ for any $i\neq i'$ by Property 5. 
Hence, $(P,\cB^*)$ is an $(\alpha, h, d)$-junior caterpillar in $G'$, 
where $\cB^*=\{B_{i,j} : i \in [h], j \in [d]\}$.
This finishes the proof of the lemma.
\end{proof}

\section{Conclusions}\label{sec:conc}
We would like to conclude with a number of future research directions.

Our result inscribes into a recent trend in studying the (strong) Erd\H{o}s-Hajnal property for graph classes closed under complementation.
Here, the progress is triggered by the result of Fox and Sudakov~\cite{fs2008}, see Theorem~\ref{thm:fs}, which allows to concentrate on sparse graphs. 
For a fixed graph $H$, let $\cG_H$ be the class of graphs that does not contain $H$ nor the complement of $H$ as an induced subgraph.
As shown in~\cite{hooks}, a simple randomized construction shows that $\cG_H$ does not have the strong Erd\H{o}s-Hajnal property unless $H$ is a forest or the complement of a forest. 
In this work, we significantly extend the family of $H$ for which $\cG_H$ is known to have the strong Erd\H{o}s-Hajnal property from paths~\cite{blt2015} and hooks~\cite{hooks} to arbitrary caterpillars. 
We believe that Conjecture~\ref{conj1} is true in general. 
%
%
The natural next step would be to prove the conjecture for the tree depicted in Figure~\ref{fig:unknown}. 

To prove the (strong) Erd\H{o}s-Hajnal property for graph classes closed under complementation 
similar techniques are used as for proving $\chi$-boundedness. 
A graph class $\cG$ is called \emph{$\chi$-bounded} if for every $k$ there exists a constant $c$ such that every graph $G \in \cG$ with chromatic number at least $c$
contains a complete subgraph on $k$ vertices. 
The well-known Gy\'arf\'as-Sumner conjecture~\cite{gyarfas,sumner} asserts that for every tree $T$, the class of $T$-free graphs is $\chi$-bounded. The proof of Bousquet, Lagoutte, and Thomass\'{e}~\cite{blt2015} can be considered as the Erd\H{o}s-Hajnal analogue of the proof of Gy\'arf\'{a}s that graphs excluding a fixed path are $\chi$-bounded. 

In the world of $\chi$-boundedness, an old result of Scott~\cite{scott1997} proves that if one excludes a fixed tree $T$ and \emph{all its subdivisions}, then one obtains 
a $\chi$-bounded graph class. A weakening of Conjecture~\ref{conj1} would be the following. 
\begin{conjecture}\label{conj:subdiv}
For every fixed tree $T$, the set of graphs $G$ such that neither $G$ nor the complement of $G$ contains a subdivision of $T$ as an induced subgraph, has the strong Erd\H{o}s-Hajnal property.
\end{conjecture}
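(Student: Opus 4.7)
The plan is to adapt the framework of Theorem~\ref{thm:main} to the broader setting of Conjecture~\ref{conj:subdiv}, exploiting two key advantages: the hypothesis is stronger (we forbid every subdivision of $T$ as an induced subgraph, not merely $T$ itself), and since any subdivision of $T$ suffices as a witness, each edge of $T$ may be realized as an induced path of arbitrary length in the host graph $G$. As in the proof of Theorem~\ref{thm:main}, I would first invoke Theorem~\ref{thm:fs} and complementation to reduce to a sparse graph with maximum degree at most $\eps|V(G)|/\ell$, partition the vertex set into $\ell$ colour classes, and assume that there is no anti-adjacent pair of linear size between any two colour classes; otherwise we are done.

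The central object I would introduce is a \emph{$T$-skeleton}: a colour-compatible family $\{B_v : v \in V(T)\}$ of pairwise-disjoint $\alpha$-buds in $G$ such that for every edge $uv \in E(T)$ we have $B_u \subseteq N(B_v)$, while for every non-edge $\{u,v\}$ of $T$ the buds $B_u$ and $B_v$ are anti-adjacent. The key claim would read: for every tree $T$ there exist $\alpha(T) > 0$ and $\ell_0(T)$ such that every $\ell_0$-coloured $\alpha$-clean graph contains a $T$-skeleton. I would prove it by induction on $|V(T)|$: remove a leaf $v$ of $T$, with unique neighbour $u$; apply the inductive hypothesis to $T-v$ on a suitably chosen sub-family of colour classes to obtain a $(T-v)$-skeleton; then adapt the iterative consumption procedure from the proof of \thref{lem:main} on the remaining colour classes to produce a new $\alpha$-bud $B_v$ contained in the neighbourhood of $B_u$ and anti-adjacent to every other previously placed bud.

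Once a $T$-skeleton is in hand, an induced subdivision of $T$ in $G$ is extracted by exactly the device used at the end of the proof of Theorem~\ref{thm:main}: enumerate the edges $uv$ of $T$ in some order, and for each such edge apply \thref{lem:blt} inside a carefully carved-out portion of $B_u \cup B_v$ together with some auxiliary volume, obtaining an induced path joining a preselected anchor vertex of $B_u$ to one of $B_v$, subject to the constraint that distinct edge-paths remain pairwise anti-adjacent in their interiors. This constraint is enforced by the anti-adjacency between non-adjacent buds of the skeleton combined with a decreasing-budget argument analogous to the geometric scaling by powers of $2$ used in the paper; colour-compatibility and $\alpha$-cleanness guarantee that enough ``private'' volume persists at each step, provided $\alpha$ was taken small enough relative to $|E(T)|$.

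The hardest step, I expect, is the inductive construction of the $T$-skeleton at high-degree internal vertices of $T$. In the caterpillar proof the spine gives a natural linear order along which buds are introduced, and the procedure of \thref{lem:main} can afford to absorb colour classes one at a time while keeping at most one designated colour class ``over-occupied''. For an arbitrary tree, many sibling buds must simultaneously be adjacent to a common parent bud, and the anti-adjacency constraints run across many branches at once rather than along a single path, so a substantial generalization of the fern-growing procedure would be needed. I anticipate $\ell_0(T)$ to grow as a tower whose height depends on the depth of $T$, with $\alpha(T)$ correspondingly tiny; keeping these quantities finite while avoiding vicious interactions between the constructions on different branches of $T$ is the crux of the argument.
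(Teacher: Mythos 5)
This statement is an \emph{open conjecture}, not a theorem of the paper: the authors do not prove it and explicitly say so. In the paragraph following its statement they write that, following Scott's strategy for $\chi$-boundedness, one would split into a ``large balls'' regime (in which they believe the techniques of this paper can be made to work) and a complementary ``small balls'' regime, where ``the main tool in the small local chromatic number regime is a Ramsey-like lemma (Lemma~5 of~\cite{scott1997}), which has no good analogue in the Erd\H{o}s-Hajnal setting.'' Your proposal does not engage with this dichotomy at all, so it cannot be compared against a proof in the paper --- there is none.

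Beyond that, your outline has a genuine gap exactly where you flag the difficulty yourself. The notion of a \emph{$T$-skeleton} is a natural target, and the last paragraph of your sketch correctly locates the crux: ``a substantial generalization of the fern-growing procedure would be needed.'' But no such generalization is offered, and it is precisely here that the caterpillar hypothesis is used in an essential, non-incidental way. In the paper's Lemma~\thref{lem:main}, the fern is a rooted tree with \emph{uniform} branching $d+1$, built by a process in which at any time at most one colour class is ``over-occupied,'' and the extraction step (Lemma~\thref{lem:fern2cat}) succeeds only because the caterpillar's spine is a root-to-leaf path in the fern, with the $d$ pendant buds at each spine vertex supplied by the remaining children of the corresponding fern node. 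This does not produce anything resembling the skeleton of a tree that is not a caterpillar (e.g.\ the spider of Figure~\ref{fig:unknown}). Your leaf-removal induction also has a concrete obstruction: after a $(T-v)$-skeleton is placed, the prospective new bud $B_v$ must sit inside $N(B_u)$ while remaining anti-adjacent to every other bud $B_w$, and it is not clear that $N(B_u)$ minus the neighbourhoods of the other buds leaves a connected set that is still an $\alpha$-bud in a fresh colour class; the paper's bookkeeping guarantees this only through the linear, spine-driven order in which buds are consumed, which is unavailable for arbitrary trees. In short: you have correctly identified the right object ($T$-skeletons, realizing each edge of $T$ by an induced path via Lemma~\thref{lem:blt}), but the construction of that object for general $T$ is the open problem, and the paper neither claims nor contains a solution to it.
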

The proof in~\cite{scott1997} 
considers two cases, depending on whether constant-radius balls have small or large chromatic number.
In the world of the Erd\H{o}s-Hajnal property, this translates to whether the following property is true:
\begin{quote}
For some radius $r \geq 1$ and a constant $\eps > 0$, every set $A \subseteq V(G)$ of size at least $\eps |V(G)|$
contains a vertex $v \in A$ such that in $G[A]$, at least $\eps |A|$ vertices of $A$ are within distance at most $r$ from $v$.
\end{quote}
If a graph $G$ has the above property for some fixed $r$ and $\eps$,
then we say that $G$ has the \emph{large balls} property, which corresponds to the large local chromatic number case of~\cite{scott1997}.
In a hypothethical proof of Conjecture~\ref{conj:subdiv}, we believe that one can handle the case of the large balls property with the techniques of this paper.
However, if this property is false, we do not know how to proceed. In Scott's work~\cite{scott1997}, the main tool in the small local chromatic number regime is a Ramsey-like lemma (Lemma 5 of~\cite{scott1997}), which has no good analogue in the Erd\H{o}s-Hajnal setting.

Finally, it would be tempting to use the approach of this paper to make a progress of the Gy\'arf\'as-Sumner conjecture, say to prove it for any caterpillar $T$. 
While many parts of the proof can be translated to the $\chi$-boundedness regime, one step specific to the Erd\H{o}s-Hajnal setting seems is an obstacle:
in the proof of Lemma~\ref{lem:main}, we argued that $N(A_0)$ contains most of every set $W^k_i$ for $i \neq a$, and therefore the set $A$ and the index $b$ exist;
in the $\chi$-boundedness setting, it is not clear how to proceed if $N(W^k_a) \cap W^k_i$ has small chromatic number for every $i,a \in I^k$, $i \neq a$.

\paragraph*{Acknowledgements.}
The second author thanks Bartosz Walczak for long discussions on research directions outlined in the conclusions, in particular regarding $\chi$-boundedness of various graph classes.

\medskip

After this preprint was finished, we learned that Seymour and Spirkl~\cite{sophie} 
obtained independently the same result as Theorem~\ref{thm:main}.

\bibliographystyle{abbrv}
\bibliography{refs}

\end{document}